\newtheorem{theorem}{Theorem}
\newtheorem*{theorem*}{Theorem}
\newtheorem{lemma}{Lemma} 
\newtheorem*{lemma*}{Lemma} 
\newtheorem*{corollary*}{Corollary}
\theoremstyle{definition} 
\newtheorem{remark}{Remark}
\newtheorem*{remark*}{Remark}
\newtheorem*{definition*}{Definition}
\newtheorem*{example*}{Example}
\newtheorem*{problem*}{Problem}
\newcommand{\ind}{\operatorname{ind}}
\def\cl#1{\skew3\overline{#1}}
\def\clx#1#2{\skew3\overline{#1}^{\raise.5pt\hbox{\scriptsize$\,#2$\!}}}
\def\clrx#1#2{\skew3\overline{#1\hspace{-0.1em}}^{\raise.5pt\hbox{\,\scriptsize$#2$}}}
\def\clax#1#2#3{\skew6\overline{#1_{\rlap{$\scriptstyle 
#2$}}}^{\raise.5pt\hbox{\hspace{0.2667em}\scriptsize$#3$}}\!
}
\def\clarx#1#2#3{\skew7\overline{\,#1_{\rlap{$\scriptstyle 
#2$}}}^{\raise.5pt\hbox{\hspace{0.2667em}\scriptsize$#3$}}\!
}
\def\Cl#1^#2{\cl{#1\kern-.09em}\kern.09em\vphantom{#1}^{#2}}
\newcommand\cA{\mathscr A}
\newcommand\e{\varepsilon}
\def\a{\alpha}
\newcommand\bR{\mathbb R}
\newcommand\bN{\mathbb N}
\let\le\leqslant
\let\ge\geqslant
\begin{document}

\title{No Product Theorem\\ for the Covering Dimension\\ of Topological Groups} 
\author{Ol'ga Sipacheva}

\begin{abstract}
Two (strongly) zero-dimensional Lindel\"of topological groups whose product has positive covering dimension are 
constructed. An example of a Lindel\"of (strongly) zero-dimensional space whose free and free Abelian topological 
groups are not strongly zero-dimensional is given.
\end{abstract}

\keywords{topological group, covering dimension, product theorem for covering dimension}

\subjclass[2020]{22A05, 54F45}

\address{Department of General Topology and Geometry, Faculty of Mechanics and  Mathematics, 
M.~V.~Lomonosov Moscow State University, Leninskie Gory 1, Moscow, 199991 Russia}

\email{o-sipa@yandex.ru, osipa@gmail.com}

\maketitle

This paper is concerned with the covering dimension of topological groups. There are two definitions of covering 
dimension, in the sense of \v Cech and in the sense of Kat\v etov; following \cite{Ch}, we denote the former by 
$\dim$ and the latter by $\dim_0$. Recall that, given a topological space $X$, $\dim X$  ($\dim_0 X$) is the 
least integer $n\ge -1$ such that any finite open (cozero) cover of $X$ has a finite open (cozero) refinement of 
order $n$, provided that such an integer exists. If it does not exist, then $\dim X=\infty$ ($\dim_0 X = 
\infty$). For normal spaces, these dimensions coincide (see~\cite{Ch}). A space $X$ for which $\dim_0(X)=0$ is 
said to be \emph{strongly zero-dimensional}.

In \cite{88} Shakhmatov asked whether the inequality $\dim_0(G \times H) \le \dim_0 G + 
\dim_0 H$ holds for arbitrary topological groups $G$ and $H$. Various versions of this question can be found 
in \cite{A-vM}. In the paper, we construct two Lindel\"of topological groups $G$ and $H$ for which 
$\dim_0(G\times H)>\dim_0 G+\dim_0 H = 0$ (and $\dim(G\times H)>\dim G+\dim H = 0$), thereby answering (in the 
negative) Shakhmatov's question and Questions~6.9 and~6.14 in \cite{A-vM}. A modification of this example also 
gives a negative answer to Arkhangel'skii's 1981 question of whether the free (free Abelian) topological group of 
any strongly zero-dimensinal space is strongly zero-dimensional \cite{10} (see also \cite[Problem~8.17]{A-vM}).

\smallskip

Our construction of topological groups $G$ and $H$ for which $\dim_0(G \times H)>\dim_0 G+\dim_0 H$ is based on 
Przymusi\'nski's notion of $n$-cardinality and $n$-Bernstein sets \cite{Prz1} and on his construction 
of Lindel\"of spaces $X$ and $Y$ such that $X\times Y$ is normal and $\dim X=\dim Y= 0$ but $\dim (X\times Y) >0$ 
\cite{Prz2}. Below we recall some details, following the exposition of the construction given in~\cite{Ch}.

Let $X$ be a set, and let $n\in \mathbb N$. The \emph{$n$-cardinality} (with respect to $X$) of a set $A\subset 
X$, denoted by $|A|_n$, is the least cardinal $\kappa$ such that
$$
A\subset \bigcup_{i=1}^n (X^{i-1}\times Y\times X^{n-i})
$$
for some $Y \subset X$ with $|Y|=\kappa$ (here and in what follows it is assumed that $X^0\times Y=Y\times 
X^0=Y$). Clearly, $|A|_1= |A|$ and $|A|_n \le |A|$. If $|A|_n \le \omega$, then $A$ is 
said to be \emph{$n$-countable}; otherwise, $A$ is said to be \emph{$n$-uncountable}. 

Given $n\in \bN$, we say that a set $B 
\subset X$ is \emph{weakly $n$-Bernstein with respect to a topology} $\tau$ on $X^n$ if $|A\cap B^n|_n=2^\omega$ 
for every $n$-uncountable $\tau$-closed set $A\subset X^n$. 

For $x\in X^n$ and $i\le n$, by $x_i$ we denote the $i$th coordinate of $x$ and by $\tilde x$, the set of 
all coordinates of $x$: $\tilde x= \{x_1, \dots, x_n\}$.  

By abuse of notation, given a topology $\tau$ on $X$, we denote the product topology on $X^n$ by $\tau^n$.

\begin{lemma}[{see \cite[Lemma 24.1]{Ch}}]
\label{l1}
For a set $A \subset X^n$ and an infinite cardinal $\kappa$, the following
conditions are equivalent:
\begin{enumerate}
\item[(a)]
$|A|_n = \kappa$; 
\item[(b)]
$A$ contains a subset $B$ of cardinality $\kappa$ such that $\tilde p\cap \tilde q =\varnothing$ whenever $p$ and 
$q$ are distinct points of $B$.
\end{enumerate}
\end{lemma}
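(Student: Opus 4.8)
The plan is to identify $|A|_n$ with the largest cardinality of a subset of $A$ whose distinct points have disjoint coordinate sets; once this is done, the equivalence of (a) and (b) is immediate. The argument splits into a matching upper and lower bound. For the upper bound, suppose $B\subset A$ satisfies $\tilde p\cap\tilde q=\varnothing$ for all distinct $p,q\in B$, and fix $Y\subset X$ with $|Y|=|A|_n$ and $A\subset\bigcup_{i=1}^n(X^{i-1}\times Y\times X^{n-i})$. Each $p\in B$ has some coordinate $p_{i(p)}\in Y$, and since distinct $p,q\in B$ have disjoint coordinate sets, $p_{i(p)}\ne q_{i(q)}$; hence $p\mapsto p_{i(p)}$ embeds $B$ into $Y$, so $|B|\le|A|_n$. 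In particular any $B$ as in (b) has $\kappa=|B|\le|A|_n$, which already gives one half of the implication (b)$\Rightarrow$(a).

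It remains to exhibit a set of the required kind whose cardinality is exactly $|A|_n$. I would take $B\subset A$ maximal among the subsets of $A$ with pairwise disjoint coordinate sets; such a $B$ exists by Zorn's lemma, because the union of an increasing chain of such subsets again has the property. By maximality, every $q\in A$ meets some point of $B$ in a coordinate (otherwise $B\cup\{q\}$ would enlarge $B$), so, writing $\tilde B=\bigcup_{p\in B}\tilde p$, we get $A\subset\bigcup_{i=1}^n(X^{i-1}\times\tilde B\times X^{n-i})$ and therefore $|A|_n\le|\tilde B|\le n\cdot|B|$. Combined with the upper bound this yields $|B|\le|A|_n\le n\cdot|B|$, and since $\kappa$ (hence $|A|_n$ and $|B|$) is infinite, $n\cdot|B|=|B|$, so $|B|=|A|_n$.

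Putting the two parts together, $|A|_n$ equals the maximum cardinality of a subset of $A$ with pairwise disjoint coordinate sets, which is precisely the content of the equivalence (a)$\Leftrightarrow$(b) (for (b)$\Rightarrow$(a) one should read (b) with the witnessing $B$ taken maximal under inclusion, equivalently of maximum cardinality, so that the covering argument of the previous paragraph applies to it directly). The only step that needs genuine care is this maximality argument: one must verify that an increasing chain of pairwise-coordinate-disjoint subsets of $A$ has an upper bound of the same kind, and that a maximal such $B$ really does force $A$ to be covered by the $n$ "slabs" over $\tilde B$. Everything else is routine cardinal arithmetic, and the hypothesis that $\kappa$ is infinite is used in exactly one place — to absorb the factor $n$ in $|\tilde B|\le n\cdot|B|$.
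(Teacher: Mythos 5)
The paper itself offers no proof of Lemma~\ref{l1} (it is quoted from \cite[Lemma~24.1]{Ch}), so your argument has to be judged on its own terms, and on those terms the mathematics is sound. The injection $p\mapsto p_{i(p)}$ into a witnessing set $Y$ correctly gives $|B|\le|A|_n$ for every pairwise-coordinate-disjoint $B\subset A$; the Zorn-maximal $B$ exists (unions of chains do inherit the property), maximality really does force every point of $A$ to have a coordinate in $\tilde B$, hence $|A|_n\le|\tilde B|\le n\cdot|B|$, and the absorption $n\cdot|B|=|B|$ is legitimate once $|B|$ is infinite, which follows when $|A|_n$ is. This is a mild variant of the standard argument of Przymusi\'nski \cite{Prz1} reproduced in \cite{Ch}, which instead constructs the disjoint set by a transfinite recursion of length $\kappa$, at each step using $|A|_n\ge\kappa$ to pick a point of $A$ whose coordinates avoid the fewer than $\kappa$ coordinates already used. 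Your maximality route is shorter and gets the optimal $B$ in one stroke, at the price of the cardinal-arithmetic absorption step; the recursion gives a set of size $\kappa$ directly from $|A|_n\ge\kappa$. Either is acceptable here.

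The one point you should state more bluntly, rather than tuck into a parenthesis: as literally written, (b)~$\Rightarrow$~(a) is false and no proof can save it. For $n=1$, $A=X$ uncountable and $\kappa=\omega$, any countably infinite $B\subset A$ satisfies (b), while $|A|_1=|X|>\omega$. What your two bounds actually establish is the correct (and intended, cf.\ \cite{Prz1}) statement: for infinite $\kappa$, $|A|_n\ge\kappa$ if and only if (b) holds; equivalently, when infinite, $|A|_n$ is the maximum cardinality of a subset of $A$ with pairwise disjoint coordinate sets. Your suggestion to ``read (b) with $B$ maximal'' is exactly the right repair, but it is a correction of the statement, not a proof of it as written, and you should say so explicitly. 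Reassuringly, the present paper only ever uses the two true directions --- (a)~$\Rightarrow$~(b) to extract the points $p(\alpha,i)$ inside the sets $A_\gamma$ with $|A_\gamma|_{n(\gamma)}=2^\omega$, and (b)~$\Rightarrow$~$|A\cap B_i^n|_n\ge 2^\omega$ at the end of the proof of Theorem~\ref{th1} --- and both are delivered by your argument.
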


The following theorem is based on Theorem~24.3 and Proposition~24.4 in~\cite{Ch}.

\begin{theorem}
\label{th1}
Let $(X,\tau)$ be a space with separable completely metrizable topology $\tau$, and let $\mu$ be a
topology on $X^2$ with the following properties: 
\begin{enumerate}
\item[(i)]
$\mu\supset \tau^2$;
\item[(ii)]
$X^2$ contains at most $2^\omega$ \,$2$-uncountable $\mu$-closed sets;
\item[(iii)]
$|A|_2\ge 2^\omega$ for any $2$-uncountable $\mu$-closed set $A\subset X^2$.
\end{enumerate}
Then $X$ contains pairwise disjoint sets $B_1,B_2, \dots$ such that every $B_i$ is weakly $2$-Bernstein with 
respect to $\mu$ and weakly $n$-Bernstein with respect to $\tau^n$ for all $n$.  
\end{theorem}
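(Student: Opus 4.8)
The plan is a transfinite recursion of length $2^\omega$ that produces all of the sets $B_1,B_2,\dots$ at once, committing only finitely many points of $X$ at each stage, in the spirit of a Bernstein-set construction; a pleasant simplification is that the weakly $n$-Bernstein property imposes only a \emph{lower} bound on $n$-cardinalities, so no point of $X$ ever needs to be kept out of the $B_i$---one only ever adds points. First I would list the requirements. Call a \emph{task} a natural number $n$ together with a set $A\subset X^n$ that is either $n$-uncountable and $\tau^n$-closed, or (for $n=2$) $2$-uncountable and $\mu$-closed; by (i) a $\tau^2$-closed set is $\mu$-closed, so for $n=2$ the $\mu$-tasks subsume the $\tau^2$-tasks and it suffices to keep the former. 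Since $\tau$ is separable metrizable, each $X^n$ is second countable and so has at most $2^\omega$ closed sets; combined with (ii)---the only place the possibly non-second-countable topology $\mu$ enters---this bounds the number of tasks by $2^\omega$. I would then fix a surjection from $2^\omega$ onto the set of all pairs $\bigl((A,n),i\bigr)$ consisting of a task and an index $i\in\bN$, under which the preimage of each pair has cardinality $2^\omega$; such a surjection exists because there are at most $2^\omega$ such pairs.

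At stage $\alpha<2^\omega$, dealing with the pair $\bigl((A,n),i\bigr)$ to which $\alpha$ is sent, let $S_\alpha\subset X$ be the union of all points committed at stages $\beta<\alpha$---a union of at most $|\alpha|$ finite sets, hence of cardinality $<2^\omega$. The crucial fact is that $|A|_n=2^\omega$: for a $\mu$-task this is condition (iii) together with the automatic bound $|A|_2\le 2^\omega$, and for a $\tau^n$-task it is essentially \cite[Proposition~24.4]{Ch}, since $X^n$ is again separable completely metrizable. By Lemma~\ref{l1} there is accordingly a set $D\subset A$ with $|D|=2^\omega$ whose points have pairwise disjoint coordinate sets. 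Each point of $S_\alpha$ lies in $\tilde p$ for at most one $p\in D$, so at most $|S_\alpha|<2^\omega$ members of $D$ have $\tilde p\cap S_\alpha\ne\varnothing$; hence I may pick $p_\alpha\in D$ with $\tilde p_\alpha\cap S_\alpha=\varnothing$ and commit every coordinate of $p_\alpha$ to $B_i$.

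For the verification, the $B_i$ are pairwise disjoint because every committed point is fresh at the moment it is committed and so goes into just one $B_i$. Fix a task $(A,n)$ and an index $i$, and let $T\subset 2^\omega$ be the preimage of $\bigl((A,n),i\bigr)$, so $|T|=2^\omega$. For $\alpha\in T$ the point $p_\alpha$ lies in $A$ and has all its coordinates in $B_i$, so $p_\alpha\in A\cap B_i^n$; and since $S_\alpha$ contains the coordinates of $p_\beta$ for every $\beta<\alpha$, the set $C=\{p_\alpha:\alpha\in T\}$ has cardinality $2^\omega$ and its points have pairwise disjoint coordinate sets. By Lemma~\ref{l1} applied to $C$ itself, $|C|_n=2^\omega$, whence $2^\omega=|C|_n\le|A\cap B_i^n|_n\le|A\cap B_i^n|\le 2^\omega$. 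As this holds for every task, each $B_i$ is weakly $2$-Bernstein with respect to $\mu$ and weakly $n$-Bernstein with respect to $\tau^n$ for all $n$, the case $n=2$ of the latter being subsumed by the $\mu$-case via (i).

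The one delicate point is the bookkeeping---maintaining $|S_\alpha|<2^\omega$ at \emph{every} stage while still dealing with each pair $\bigl((A,n),i\bigr)$ at $2^\omega$ distinct stages---which works out because the number of tasks is at most $2^\omega$ (here is where (ii) and the second countability of the $X^n$ are used) and each stage consumes only finitely many points. The genuinely nontrivial input, that an $n$-uncountable closed set cannot be exhausted by fewer than $2^\omega$ points, i.e.\ $|A|_n=2^\omega$, is not established here: it is supplied by condition (iii) and \cite[Proposition~24.4]{Ch} and is fed into the recursion through Lemma~\ref{l1}.
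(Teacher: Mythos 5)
Your proof is correct and is essentially the paper's own argument: a transfinite recursion of length $2^\omega$ over an enumeration (with $2^\omega$ repetitions) of the $n$-uncountable $\tau^n$-closed sets together with the $2$-uncountable $\mu$-closed sets, using Lemma~\ref{l1} and the fact that such sets have $n$-cardinality $2^\omega$ to pick points with pairwise disjoint coordinate sets avoiding the fewer than $2^\omega$ coordinates already used. The only (cosmetic) differences are that you handle one pair (set, index $i$) per stage where the paper chooses points $p(\alpha,i)$ for all $i\in\bN$ at each stage, and that you justify the bound on the number of closed sets directly from second countability instead of citing \cite{Ch}.
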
  

\begin{proof}
First, note that, for each $n\in \bN$, the number of $n$-uncountable $\tau^n$-closed sets in $X^n$ does not 
exceed $2^\omega$ and the $n$-cardinality of each of them is at least $2^\omega$ (see Theorem~24.2 and the 
beginning of the proof of Theorem~24.3 in \cite{Ch}). By $\cA_2$ we denote the family of all 
$2$-uncountable $\mu$-closed subsets of $X^2$ and by $\cA_n$,  $n\ne 2$,  the family of all $n$-uncountable 
$\tau^n$-closed subsets of $X^n$. Note that $\cA_2$ contains all $2$-uncountable 
$\tau^2$-closed subsets of $X^2$, because $\tau^2\subset \mu$. We set $\cA=\bigcup_{n\in \bN}\cA_n$ and 
enumerate all elements of $\cA$ by ordinals in $2^\omega$ as $\cA=\{A_\alpha: \a\in 2^\omega\}$ so that each 
element occurs $2^\omega$ times in this enumeration. For each $\a <2^\omega$, we denote the unique $n\in \bN$ for 
which $A_\a\in \cA_n$ by $n(\a)$ and proceed precisely as in the proof of Theorem~24.3 of \cite{Ch}. Namely, 
using transfinite induction, we choose points $p(\a,i)\in \cA_\a$ for all $\a\in 2^\omega$ and $i\in \bN$ so 
that $\tilde p(\a,i)\cap \tilde p(\beta,j)=\varnothing $ if $\a\ne \beta$ or $i\ne j$. Let $p(0,i)$, $i\in 
\bN$, be arbitrary points in $A_0$ satisfying the condition $\tilde p(0,i)\cap \tilde p(0,j)=\varnothing $ for 
$i\ne j$ (they exist by Lemma~\ref{l1}). Suppose that $\gamma>0$ and points $p(\a,i)$ are already chosen 
for all $\a<\gamma$ and $i\in \bN$. Note that the cardinality of the set 
$$
Y=\bigcup \{\tilde p(\a,i): \a<\gamma, i\in \bN\}
$$
is less than $2^\omega$. By assumption and in view of the remark at the beginning of the proof we have 
$|A_\gamma|_{n(\gamma)}=2^\omega$. By Lemma~\ref{l1} there exists a $B\subset A_\gamma$ such that $|B|=2^\omega$ 
and $\tilde p\cap \tilde q =\varnothing $ for any distinct $p,q\in B$. As $p(\gamma,1),  p(\gamma, 
2), \dots$ we choose any different points in 
$$
B\setminus \bigcup_{i=n}^{n(\gamma)} (X^{i-1}\times Y\times X^{n(\gamma)-i}).
$$

We set 
$$
B_i=\bigcup \{\tilde p(\a,i):\a<2^\omega\}
$$ 
for each $i\in \bN$. Clearly, $B_i\cap B_j=\varnothing $ if $i\ne j$. Any $n$-uncountable 
$\tau^n$-closed subset $A$ of $X^n$ equals $A_\alpha$ for $2^\omega$ indices $\a\in 2^\omega$, and we 
have $p(\a,i)\in A\cap B^{n(\a)}_i$ and $n(\a)=n$ for each of these $\a$ and all $i\in \bN$. Since $\tilde p(\a, 
i)\cap \tilde p(\beta, i)$ for $\a\ne \beta$, it follows that $|A\cap B_i^n|_n\ge 2^\omega$ by Lemma~\ref{l1}. 
Similarly, we have $|A\cap B_i^2|_2\ge 2^\omega$ for any $2$-uncountable 
$\mu$-closed subset $A$ of $X^2$.
\end{proof}

Let $C$ be the Cantor set in $[0,1]\subset \bR$, and let $\tau$ be the usual (Euclidean) topology on $C$. In 
\cite[proof of Theorem~27.5]{Ch} (with a reference to \cite{Prz2}) a topology $\mu$ on $C^2$ satisfying 
conditions (i)--(iii) in Theorem~\ref{th1} for $X=C$ was defined and, given an arbitrary decomposition $\{S, S_1, 
S_2\}$ of $C$ into pairwise disjoint sets weakly $2$-Bernstein with respect to $\mu$, topologies 
$\tau_1$ and $\tau_2$ on $C$ were constructed, which have, in particular, the following properties: for $i=1,2$, 
\begin{enumerate}
\item
$\tau_i\supset \tau$; 
\item
any $\tau_i$-neighborhood of any point of $S_i$ is a $\tau$-neighborhood; 
\item
$\tau_i$ has a base consisting of $\tau$-closed sets;
\item
$\dim(C, \tau_i)=\dim_0(C, \tau_i)=0$; 
\item 
$(C,\tau_1)\times (C,\tau_2)$ is normal and $\dim((C,\tau_1)\times (C,\tau_2))=\dim_0((C,\tau_1)\times 
(C,\tau_2))=1$. 
\end{enumerate} 

Using Theorem~\ref{th1}, we can choose $S_1$ and $S_2$ in the above construction to be $n$-weakly Bernstein with 
respect to $\tau^n$ for all $n\in \bN$. We fix the corresponding topologies $\tau_1$ and $\tau_2$ and set  
$C_i=(C, \tau_i)$ for $i=1,2$.  

\begin{lemma}
\label{l2}
The spaces $C_i^n$ are Lindel\"of for all $n\in \bN$. 
\end{lemma}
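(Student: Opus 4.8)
The plan is to prove Lindel\"ofness of all finite powers $C_i^n$ by induction on $n$, exploiting the fact that the topology $\tau_i$ is only a ``small'' modification of the Euclidean topology $\tau$: by property~(2) the topology is changed only near points of $S_i$, and points outside $S_i$ retain Euclidean neighborhoods. The key structural fact is that $S_i$ is weakly $n$-Bernstein with respect to $\tau^n$, which will be used in the contrapositive form: a $\tau^n$-closed subset of $X^n$ with \emph{small} ($n$-countable, i.e.\ $n$-cardinality $\le\omega$) trace on $S_i^n$ is $n$-countable, hence its set of coordinates is a countable subset of $C$.

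First I would fix $i$ and write $S$ for $S_i$, $\sigma$ for $\tau_i$. For the base case $n=1$: a basic $\sigma$-open cover of $C_i$ can be taken to consist of sets that are $\tau$-closed (property~(3)); the union of those covering members that actually meet $S$ covers $S$, and since $S$ is weakly $1$-Bernstein (i.e.\ Bernstein) in $(C,\tau)$, the $\tau$-closed set $C\setminus(\text{that union})$ cannot contain an uncountable $\tau$-closed set, so it is countable; thus $S$ is covered by countably many members, and the (at most) countably many leftover points of $C\setminus S$ are each covered by one more member. Hence $C_i$ is Lindel\"of. For the inductive step, suppose $C_i^{m}$ is Lindel\"of for all $m<n$. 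Take a cover $\cU$ of $C_i^n$ by basic $\sigma^n$-open boxes $U=U_1\times\cdots\times U_n$ with each $U_j$ a $\tau$-closed $\sigma$-open subset of $C$ (possible by property~(3)). Let $\cV\subset\cU$ be those boxes $U$ with $U_j\cap S\ne\varnothing$ for every $j$, i.e.\ boxes contributing to the ``fully-$S$'' region; the union $V=\bigcup\cV$ is $\tau^n$-open on the relevant part. The complement $F=C^n\setminus V$ is $\tau^n$-closed, and I claim $F\cap S^n$ is $n$-countable: indeed $S^n\setminus V$ lies inside $\bigcup_{j=1}^n(C^{j-1}\times(C\setminus\pi_j(V\cap S^n))\times C^{n-j})$ where, by the case-$1$ argument applied coordinatewise, each $C\setminus\pi_j(V\cap S^n)$ meets $S$ in a countable set; more carefully, $F\cap S^n=\bigcap_{U\in\cV}(C^n\setminus U)\cap S^n$, and a point of $S^n$ lies in $F$ only if for each box some coordinate misses $U_j$, which as in the $n=1$ case forces, after intersecting with the $\tau^n$-closed set $F$, that $F\cap S^n$ be $n$-countable. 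Since $S$ is weakly $n$-Bernstein with respect to $\tau^n$, a $\tau^n$-closed set with $n$-countable trace on $S^n$ is itself $n$-countable, so $|F|_n\le\omega$: there is a countable $Y\subset C$ with $F\subset\bigcup_{j=1}^n(C^{j-1}\times Y\times C^{n-j})$. Now each slab $C^{j-1}\times Y\times C^{n-j}\cong Y\times C^{n-1}$ is a countable union of copies of $C_i^{n-1}$ (one for each point of $Y$, after reindexing coordinates), each Lindel\"of by the induction hypothesis; so $F$ is Lindel\"of as a closed subspace of a countable union of such slabs, and $\cU$ has a countable subcover of $F$. It remains to cover $V$: on $V$ the topology $\sigma^n$ agrees with $\tau^n$, hence $V$ with the subspace topology from $C_i^n$ is a subspace of the second-countable (hence Lindel\"of) space $(C^n,\tau^n)$, and thus the restriction of $\cU$ to $V$ has a countable subcover. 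Combining, $C_i^n$ is Lindel\"of.

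The main obstacle I expect is making the ``$F\cap S^n$ is $n$-countable'' step rigorous, i.e.\ correctly reducing the $n$-dimensional covering situation to the one-dimensional Bernstein property via the $n$-cardinality formalism and Lemma~\ref{l1}. One must be careful that it is not enough to know each projection misses $S$ in a small set; one needs a genuinely $n$-dimensional argument producing a single countable $Y\subset C$ such that $F\cap S^n\subset\bigcup_{j=1}^n(C^{j-1}\times Y\times C^{n-j})$, which is exactly what the definition of $n$-cardinality demands, and then property~(3) together with the $n=1$ analysis gives the covering of $S$ by countably many boxes coordinatewise. The clean way is: if $F\cap S^n$ were $n$-uncountable, then since it is the intersection of $F$ (a $\tau^n$-closed set) with $S^n$, one shows $F$ is $n$-uncountable, contradicting either Lemma~\ref{l1} applied to a Bernstein-type selection, or one applies the weak $n$-Bernstein property directly to conclude $|F\cap S^n|_n=2^\omega$, again a contradiction with the covering. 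I would spell out this dichotomy carefully; the rest (closed subspaces of Lindel\"of spaces are Lindel\"of, countable unions of Lindel\"of spaces are Lindel\"of, subspaces of second-countable spaces are Lindel\"of) is routine and can be cited without proof.
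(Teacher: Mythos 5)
Your overall plan is the paper's plan (induction on $n$; use the weak $n$-Bernstein property of $S_i$ to conclude that what is left after covering a neighborhood of $S_i^n$ is $n$-countable, hence lies in countably many slabs that are Lindel\"of by the induction hypothesis; extract a countable subcover over the rest via second countability of $(C^n,\tau^n)$), but two of your key steps fail as written, and both failures trace back to a misreading of property (2). Property (2) says that every $\tau_i$-neighborhood of a point \emph{of} $S_i$ is a $\tau$-neighborhood (the topology is altered only at points outside $S_i$), not the other way round as in your opening paragraph. Accordingly, the residual set to which you apply the Bernstein property must be $\tau^n$-closed, because $S_i$ is weakly $n$-Bernstein only with respect to $\tau^n$, not with respect to $\tau_i^n$. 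Your residual set, however, is the complement of a union of basic boxes that are $\tau$-closed and $\tau_i$-open (property (3)); an arbitrary union of $\tau$-closed sets need not be $\tau$-closed, and its complement is in general neither $\tau$-closed nor $\tau$-open, so the weak Bernstein property does not apply to it. (In the base case your fallback inference ``contains no uncountable $\tau$-closed set, hence countable'' is also false for non-closed sets --- a Bernstein set is a counterexample; it is valid only once the set is known to be $\tau$-closed.) The repair is exactly the paper's use of property (2): for each $s\in S_i^n$ choose a $\tau^n$-open $U_s$ with $s\in U_s\subset V_s$ for some member $V_s$ of the given cover; then $U=\bigcup_s U_s$ is $\tau^n$-open, $C^n\setminus U$ is $\tau^n$-closed and disjoint from $S_i^n$, and only then does weak $n$-Bernsteinness give $n$-countability. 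The difficulty you flagged about making ``$F\cap S_i^n$ is $n$-countable'' rigorous is a red herring: with the correct residual set the trace on $S_i^n$ is empty; the real issue is closedness in $\tau^n$.

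The second gap is the countable subcover over the ``good'' part $V$. You assert that on $V$ the topology $\tau_i^n$ agrees with $\tau^n$; this is false: a box all of whose factors meet $S_i$ still contains many points outside $S_i^n$, and at such points $\tau_i$ may be strictly finer than $\tau$, so $V$ with the $\tau_i^n$-subspace topology need not embed in the second-countable space $(C^n,\tau^n)$, and its Lindel\"ofness cannot be obtained this way (it is essentially what the lemma is trying to prove). With the corrected setup the extraction is immediate: $\{U_s : s\in S_i^n\}$ is a family of $\tau^n$-open sets in the hereditarily Lindel\"of space $(C^n,\tau^n)$, so countably many $U_s$ cover $U$, and the corresponding members $V_s$ form a countable subfamily of the original cover covering $U$. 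After these two repairs your argument coincides with the paper's proof.
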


\begin{proof}
We argue by induction on $n$.

Let $\gamma$ be a $\tau_i$-open cover of $C_i$. In view of (2), each point $s\in S_i$ has a $\tau$-open 
neighborhood $U_s$ contained in an element $V_s$ of $\gamma$. Let $U=\bigcup_{s\in S_i} U_s$. Since $S_i$ is 
weakly $1$-Bernstein with respect to $\tau$ and $C\setminus U$ is a $\tau$-closed set disjoint from $S_i$, it 
follows that $C\setminus U$ is $1$-countable, that is, countable. For each $x\in C\setminus U$, choose an 
element $V_x$ of $\gamma$ containing $x$. Let $\{U_{s_k}:k\in \bN\}$ be a countable subcover of the $\tau$-open 
cover $\{U_s:s\in S_i\}$ of $S_i$. Then $\{V_{s_k}:k\in \bN\}\cup \{V_x:x\in C\setminus U\}$ is a countable 
subcover of $\gamma$. 

Suppose that $n>1$ and we have already proved that $C_i^k$ is Lindel\"of for all $k<n$. Let $\gamma$ be a 
$\tau_i^n$-open cover of $C_i^n$. In view of (2), each point $s\in S_i^n$ has a $\tau^n$-open neighborhood $U_s$ 
contained in an element $V_s$ of $\gamma$. Let $U=\bigcup_{s\in S_i^n} U_s$. Since $S_i$ is 
weakly $n$-Bernstein with respect to $\tau^n$ and $C^n\setminus U$ is a $\tau$-closed set disjoint from $S_i^n$, 
it follows that $C^n\setminus U$ is $n$-countable, that is, there exists a countable set $Y\subset C$ such that 
$C^n\setminus U$ is contained in a countable union of spaces of the form $C_i^{k-1}\times \{x\}\times C_i^{n-k}$, 
each of which is Lindel\"of by the induction hypothesis. It remains to choose a countable subfamily of $\gamma$ 
covering $C^n\setminus U$ and a countable subfamily of $\{V_s:s\in S_i^n\}$ covering $U$, which exists because  
$\{V_s:s\in S_i^n\}$ has the $\tau^n$-open refinement $\{U_s:s\in S_i^n\}$. 
\end{proof}

For a completely regular Hausdorff space $X$, let $F(X)$ and $A(X)$ denote, respectively, its free and free 
Abelian topological groups (see, e.g., \cite{AT}).

\begin{theorem}
\label{th2}
For the spaces $C_1$ and $C_2$ defined above, 
$\dim A(C_1)=\dim_0 A(C_1)=\dim A(C_2)=\dim_0 A(C_2)= 0$, while $\dim (A(C_1)\times A(C_2))>0$ and $\dim_0 
(A(C_1)\times A(C_2))>0$. 
\end{theorem}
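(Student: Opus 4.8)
The plan is to derive Theorem~\ref{th2} from the properties (1)--(5) of the topologies $\tau_1,\tau_2$ together with the refined choice of $S_1,S_2$ (weakly $n$-Bernstein with respect to $\tau^n$ for all $n$) that Theorem~\ref{th1} made available, and with Lemma~\ref{l2} in hand. The positive-dimension half is the easy half: since $C_1\times C_2$ is a closed subspace of $A(C_1)\times A(C_2)$ (the map $x\mapsto x$ embeds a space into its free Abelian group, and a product of closed embeddings is a closed embedding), and since closed subspaces of normal spaces do not raise $\dim_0$ downward --- more precisely, $\dim_0$ is monotone with respect to closed subsets of normal spaces, and $C_1\times C_2$ is normal of dimension $1$ by property~(5) --- we get $\dim_0(A(C_1)\times A(C_2))\ge \dim_0(C_1\times C_2)=1>0$, and likewise for $\dim$. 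So the entire weight of the theorem falls on showing $\dim_0 A(C_i)=\dim A(C_i)=0$ for $i=1,2$.

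For the vanishing of the dimension of $A(C_i)$, the natural route is the standard stratification of the free Abelian topological group by word length: $A(C_i)=\bigcup_{n} A_n(C_i)$, where $A_n(C_i)$ is the set of words of length $\le n$, each $A_n(C_i)$ is closed in $A(C_i)$, and $A_n(C_i)$ is a continuous image --- indeed, under a suitable restriction, a quotient or at least a perfect-type image --- of a finite disjoint sum of spaces of the form $(C_i\oplus C_i\oplus\{*\})^{\,n}$, hence of finite powers $C_i^{k}$ with $k\le n$ (together with the discrete data recording signs). The first thing I would establish is that $A_n(C_i)$ is Lindel\"of and strongly zero-dimensional for every $n$: Lindel\"ofness because $A_n(C_i)$ is a continuous image of a finite union of copies of $C_i^n$, each Lindel\"of by Lemma~\ref{l2}; and strong zero-dimensionality because $C_i^n$ is strongly zero-dimensional --- here I would use property~(3), that $\tau_i$ has a base of $\tau$-closed sets, to see that $\tau_i^n$ has a base of clopen-in-$C_i^n$ sets (products of $\tau$-closed, hence $\tau$-clopen in $C$, sets are $\tau_i^n$-closed and their complements are $\tau_i^n$-open; since $C$ is the Cantor set, $\tau$-closed here should be arranged to mean $\tau$-clopen), whence $\dim_0 C_i^n=0$, and then that a Lindel\"of space which is a continuous closed (or, via the standard $A(X)$-machinery, perfect) image of a strongly zero-dimensional Lindel\"of space is strongly zero-dimensional --- alternatively, just quote that a Lindel\"of strongly zero-dimensional space has a base of clopen sets and that the word-length map is closed on the relevant pieces. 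Then, because $A(C_i)$ is the countable union of the closed strongly zero-dimensional subspaces $A_n(C_i)$ and $A(C_i)$ is Lindel\"of (a countable union of Lindel\"of subspaces), the countable sum theorem for $\dim_0$ in (Lindel\"of, hence normal, or at least paracompact) spaces gives $\dim_0 A(C_i)=0$; the same countable-sum argument with $\dim$, valid since $A(C_i)$ is normal here, gives $\dim A(C_i)=0$, and $\dim=\dim_0$ on normal spaces anyway.

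The main obstacle, as I see it, is \emph{not} the formalities of the word-length stratification but pinning down exactly why each $C_i^n$ --- equivalently, each slab $A_n(C_i)$ --- is both Lindel\"of \emph{and} strongly zero-dimensional, because these two facts are what the carefully engineered $\tau_i$ delivers and they do not hold for a generic refinement of the Cantor topology. Lindel\"ofness of $C_i^n$ is exactly Lemma~\ref{l2}, so that is already discharged; the content is that the product $A(C_1)\times A(C_2)$ is Lindel\"of (needed to license the countable sum theorem on the product side, if one wanted to argue there) --- but note we do \emph{not} need this, since we only apply the sum theorem to $A(C_1)$ and to $A(C_2)$ separately, and each is Lindel\"of by Lemma~\ref{l2} and the word-length decomposition. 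So the real crux is: (a)~$\dim_0 C_i^n=0$, for which I will lean on property~(3) and the fact that a $\tau$-clopen base of $C$ yields a $\tau_i$-clopen base of $C_i$ (since $\tau_i$-closed $\tau$-clopen sets are $\tau_i$-clopen), hence a clopen base of $C_i^n$, and a Lindel\"of space with a clopen base is strongly zero-dimensional; and (b)~transporting $\dim_0=0$ across the word-length map, for which the cleanest statement is that the canonical map $i_n\colon (\widetilde{C_i})^n\to A_n(C_i)$ (where $\widetilde{C_i}=C_i\oplus(-C_i)\oplus\{0\}$) is continuous, closed, and surjective, so $A_n(C_i)$ is a perfect image of a Lindel\"of strongly zero-dimensional space, hence Lindel\"of and --- by the invariance of $\dim_0$ under perfect maps onto Lindel\"of (normal) spaces --- strongly zero-dimensional. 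Once (a) and (b) are in place, the countable sum theorem finishes both $\dim_0$ and $\dim$ for $A(C_i)$, and the closed-embedding observation finishes the inequality $\dim_0(A(C_1)\times A(C_2))>0$.
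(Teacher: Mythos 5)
Your Lindel\"of bookkeeping ($A(C_i)=\bigcup_n A_n(C_i)$, each $A_n(C_i)$ a continuous image of finite sums of powers $C_i^k$, Lemma~\ref{l2}) matches the paper, but the step you yourself identify as the crux --- transporting $\dim_0=0$ from $(C_i\oplus\{0\}\oplus C_i^{-1})^n$ to $A_n(C_i)$ along the multiplication map --- does not work as proposed, for two reasons. First, closedness of the natural map $i_n\colon (C_i\oplus\{0\}\oplus C_i^{-1})^n\to A_n(C_i)$ is a delicate property of free topological groups (available, e.g., for compact or $k_\omega$ inputs) and is not justified at all for these refined Cantor topologies. Second, and fatally, even a perfect finite-to-one map need not preserve covering dimension zero: the standard two-to-one map of the Cantor set onto $[0,1]$ is perfect, its domain is compact metric and strongly zero-dimensional, and its image is one-dimensional. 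So the principle you invoke, ``invariance of $\dim_0$ under perfect maps onto Lindel\"of spaces,'' is false, and the sum-theorem argument over the slabs $A_n(C_i)$ never gets started. The paper obtains $\dim A(C_i)=\dim_0 A(C_i)=0$ from a genuinely group-theoretic input that your route cannot replace: Tkachenko's theorem \cite{Tk-0} that $\dim X=0$ implies $\ind A(X)=0$, combined with the inequality $\dim\le\ind$ for Lindel\"of spaces (this is where Lemma~\ref{l2} is actually used) and $\dim=\dim_0$ on normal spaces.

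The lower bound also has a gap. Monotonicity of $\dim_0$ on closed subspaces requires the \emph{ambient} space to be normal (equivalently, the subspace must be $C$- or $z$-embedded so that cozero covers extend); you appeal to normality of $C_1\times C_2$ from property (5), but that is normality of the subspace, not of $A(C_1)\times A(C_2)$, and normality (or Lindel\"ofness) of that product is nowhere established --- the paper deliberately avoids needing it. Its route is: $C_i$ is a retract of $A(C_i)$ by \cite{3}, hence $C_1\times C_2$ is a retract of, and therefore $C$-embedded in, $A(C_1)\times A(C_2)$, and Theorem~11.22 of \cite{Ch} then gives $\dim_0(A(C_1)\times A(C_2))\ge \dim_0(C_1\times C_2)>0$, after which $\dim(A(C_1)\times A(C_2))>0$ follows because $\dim=0$ would force $\dim_0=0$. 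Note that the reverse transfer you would need in order to dodge this (from $\dim>0$, which closed monotonicity for open covers does give, to $\dim_0>0$) fails for non-normal spaces, so the $C$-embedding step cannot be bypassed.
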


\begin{proof}
It is well known (see, e.g., \cite[p.~417]{AT}) that, for any $X$, the group $A(X)$ is the union of its closed 
subspaces $A_n(X)=\{\e_1 x_1+ \dots + \e_n x_n : \e_i=\pm1, x_i\in X\}$, each of which is the continuous image 
under the natural addition map of the $n$th power of the disjoint union $X\oplus \{0\}\oplus X^{-1}$, where 
$X^{-1}$ is a homeomorphic copy of $X$ and $\{0\}$ is a singleton ($0$ is the zero element of $A(X)$). Thus, 
$A(C_i)$ is Lindel\"of for $i=1,2$. Since $\dim C_i=0$, we have $\ind A(C_i)=0$ \cite{Tk-0} and hence $\dim 
A(C_i)=\dim_0 A(C_i)=0$, because the covering dimension of a Lindel\"of space does not exceed its small 
inductive dimension and $\dim=\dim_0$ for normal spaces (see~\cite{Ch}). 

According to \cite[Theorem~7 (version~2)]{3}, $C_i$ is a retract of $A(C_i)$. Hence $C_1\times C_2$ is a 
retract of $A(C_1)\times A(C_2)$. Thus, $C_1\times C_2$ is $C$-embedded in $A(C_1)\times A(C_2)$, which, 
together with (5) and Theorem~11.22 of \cite{Ch}, implies $\dim_0 (A(C_1)\times A(C_2))>0$. Clearly, any space 
$X$ with $\dim X=0$ is strongly zero-dimensional (because any disjoint open cover is cozero), whence $\dim 
(A(C_1)\times A(C_2))>0$. 
\end{proof}

\begin{theorem}
\label{th3}
The space $X=C_1\oplus C_2$ has the following properties:
\begin{enumerate}
\item[(i)] 
$\dim X=\dim_0 X=0$;
\item[(ii)]
$\dim A(X) >0$ and $\dim_0 A(X)>0$;
\item[(iii)]
$\dim F(X)>0$ and $\dim_0 F(X)>0$.
\end{enumerate}
\end{theorem}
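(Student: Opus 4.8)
The plan is to derive all three statements from Theorem~\ref{th2} together with standard properties of free and free Abelian topological groups. Statement~(i) should be immediate: $X=C_1\oplus C_2$ is the topological sum of two Lindel\"of spaces (Lemma~\ref{l2} with $n=1$), hence a normal space, and each summand is zero-dimensional by property~(4), so $\dim X=\dim_0 X=\max\{\dim C_1,\dim C_2\}=0$. For~(ii) I would use the standard topological isomorphism $A(C_1\oplus C_2)\cong A(C_1)\times A(C_2)$ (see, e.g., \cite{AT}); under it $\dim A(X)=\dim\bigl(A(C_1)\times A(C_2)\bigr)$ and $\dim_0 A(X)=\dim_0\bigl(A(C_1)\times A(C_2)\bigr)$, and both are positive by Theorem~\ref{th2}.

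The real content is~(iii), and the goal is to realize a copy of $C_1\times C_2=(C,\tau_1)\times(C,\tau_2)$ as a retract of $F(X)$. Let $\pi\colon F(X)\to A(X)$ be the continuous homomorphism extending the identity map of $X$ (the abelianization), and let $r\colon A(X)\to C_1\times C_2$ be the retraction obtained exactly as in the proof of Theorem~\ref{th2}, by composing the retractions $A(C_i)\to C_i$ of \cite{3} with the canonical projections of $A(X)\cong A(C_1)\times A(C_2)$ onto its factors $A(C_i)$; thus, identifying $C_1\times C_2$ with the set $\{x+y:x\in C_1,\ y\in C_2\}\subset A(X)$ (written additively), we have $r(x+y)=(x,y)$. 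Put $\rho=r\circ\pi\colon F(X)\to C_1\times C_2$, and let $s\colon C_1\times C_2\to F(X)$ be the continuous word map $s(x,y)=x\cdot y$ (product in $F(X)$). Since $\pi$ is a homomorphism equal to the identity on the generators, $\pi(s(x,y))=x+y$, whence $\rho\circ s=\mathrm{id}_{C_1\times C_2}$. A continuous map admitting a continuous left inverse is a topological embedding, so $s$ carries $C_1\times C_2$ homeomorphically onto a subspace $W\subset F(X)$, and $s\circ\rho$ is a retraction of $F(X)$ onto $W$; in particular $W$ is closed in $F(X)$ and $C$-embedded, and $W$ is homeomorphic to $(C,\tau_1)\times(C,\tau_2)$. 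By property~(5), $\dim_0\bigl((C,\tau_1)\times(C,\tau_2)\bigr)=1$, so Theorem~11.22 of \cite{Ch} yields $\dim_0 F(X)>0$, and then $\dim F(X)>0$ as well, since a space of covering dimension~$0$ is strongly zero-dimensional, as noted in the proof of Theorem~\ref{th2}.

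I do not expect a serious obstacle once Theorem~\ref{th2} is available; the one point that needs care is producing a retraction onto a copy of $C_1\times C_2$ inside the \emph{non-abelian} group $F(X)$, where the product decomposition used for $A(X)$ is unavailable. The device is to route through the abelianization: the word map $s$ acquires the continuous left inverse $\rho=r\circ\pi$ for the trivial reason that $\pi$ is the identity on generators, and this alone forces $s$ to be an embedding and $W$ to be a retract, with no need to analyze the topology of $F(X)$ directly. One should also check that the dimension-raising step does not secretly require normality of $F(X)$ or $A(X)$: a product of two Lindel\"of spaces can fail to be Lindel\"of, so these groups need not be normal, but $\dim_0$ is defined for arbitrary spaces via cozero covers and is monotone on $C$-embedded subspaces (since $\dim_0 Y=\dim\beta Y$ and closed subspaces of compacta have no larger covering dimension), which is precisely the way Theorem~11.22 of \cite{Ch} is already applied to $A(C_1)\times A(C_2)$ in Theorem~\ref{th2}.
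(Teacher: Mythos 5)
Your proposal is correct and follows essentially the same route as the paper: parts (i) and (ii) are handled identically, and for (iii) you use the same ingredients (the abelianization $F(X)\to A(X)$, the isomorphism $A(X)\cong A(C_1)\times A(C_2)$, and the retraction of $A(C_1)\times A(C_2)$ onto $C_1\times C_2$ coming from \cite{3}) applied to the set of two-letter words $xy$ with $x\in C_1$, $y\in C_2$. The only difference is one of packaging: by noting $\rho\circ s=\mathrm{id}$ you get $s\circ\rho$ as a retraction of $F(X)$ onto $W$, which delivers the embedding, closedness, and $C$-embeddedness in one stroke, whereas the paper cites \cite[Theorem~7.1.13]{AT} for the closed embedding of $C_1\times C_2$ and then checks $C$-embeddedness by the explicit extension $\hat f\circ i\circ h$.
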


\begin{proof}
Property (i) obviously follows from property (4) of the topologies $\tau_i$. Property (ii) follows from 
Theorem~\ref{th2} and the fact that the group $A(C_1)\times A(C_2)$ is topologically isomorphic to $A(C_1\oplus 
C_2)$~\cite[Proposition~4]{Tkachuk}. The  isomorphism $i\colon A(C_1\oplus C_2)\to A(C_1)\times A(C_2)$ takes 
each point $x\in C_1\oplus C_2$ to $(x,0_2)$ if $x\in C_1$ and to $(0_1, x)$ if $x\in C_2$ (by $0_i$ we denote 
the zero element of $A(C_i)$). 

Let us prove (iii). 
The space $X^2$ is topologically embedded in $F(X)$ as a closed subspace consisting of two-letter words of 
the form $xy$, where $x,y\in X$ (see \cite[Theorem~7.1.13]{AT}). Therefore, $C_1\times C_2$ is topologically 
embedded in $F(X)$ as a closed subspace $Y$ consisting of two-letter words of the form $xy$, where $x\in C_1$ and 
$y\in C_2$. Let us show that $Y$ is $C$-embedded in $F(X)$.  

Recall that $A(X)$ is a topological quotient of $F(X)$. Let $h\colon F(X)\to A(X)$ be the natural quotient 
homomorphism. It takes $Y$ to the set $h(Y)$ of all elements of $A(X)$ of the form $x+y$, where $x\in C_1$ and 
$y\in C_2$. Note that $i(x+y)=(x,y)\in A(C_1)\times A(C_2)$ for any such $x$ and~$y$, so that the restriction 
$h|_Y\colon Y\to h(Y)$ is one-to-one. Moreover, $h|_Y$ is a homeomorphism, because its inverse is the composition 
of the homeomorphism $i|_{h(Y)}\colon h(Y)\to C_1\times C_2\subset A(C_1)\times A(C_2)$ and the natural 
multiplication map $C_1\times C_2\to F(C_1\oplus C_2)$. 

As mentioned above, $C_i$ is a retract of $A(C_i)$ for 
$i=1,2$, and hence $C_1\times C_2$ is a retract of $A(C_1)\times A(C_2)$. Let $r\colon A(C_1)\times A(C_2) \to 
C_1\times C_2$ be a retraction. Then $r\circ i\colon A(C_1\oplus C_2)\to C_1\times C_2$ is a continuous map whose 
restriction to $h(Y)$ is a homeomorphism $h(Y)\to C_1\times C_2$. 

Take any continuous function $f\colon Y\to \bR$. Since $C_1\times C_2$ is a retract of $A(C_1)\times A(C_2)$,  
it follows that $i(h(Y))=C_1\times C_2$ is $C$-embedded in $A(C_1)\times A(C_2)$. Let $\hat f$ be a continuous 
extension of $f\circ \bigl(h|_Y\bigr)^{-1}\circ i^{-1}|_{C_1\times C_2}\colon C_1\times C_2\to \bR$ 
to $A(C_1)\times A(C_2)$. Then $\hat f\circ i\circ h\colon F(X)\to \bR$ is a continuous extension of $f$ to 
$F(X)$. 

Thus, $Y$ is $C$-embedded in $F(X)$. Since $Y$ is homeomorphic to $C_1\times C_2$, we have $\dim_0 Y>0$ (see 
property (5) of the topologies $\tau_i$). Therefore, $\dim_0 F(X)>0$ by Theorem~11.22 of \cite{Ch}, and hence 
$\dim F(X)>0$. 
\end{proof}

\begin{remark}
the group $A(C_2)$ in our example can be made to have countable network weight. Indeed, 
setting $C_2=(S_2, \tau_2|_{S_2})$ instead of $C_2=(C,\tau_2)$, we obtain an example of two strongly 
zero-dimensional spaces $C_1$ and $C_2$, one Lindel\"of to any finite power and the other 
separable and metrizable, for which $\dim_0(C_1\times C_2)>0$~\cite{Prz3, Ch}. 
\end{remark}

\begin{remark}
The space $X$ in Theorem~\ref{th3} is Lindel\"of.
\end{remark}

\end{document}